\newtheorem{thm}{Theorem}[section]
\newtheorem{lem}[thm]{Lemma}
\numberwithin{equation}{section}
\renewcommand\Re{\operatorname{Re}}
\def\Q{\mathbb{Q}}
\def\F{\mathbb{F}_q}
\def\FF{\mathbb{F}_{q^2}}
\def\3F2{{}_3\hspace{-1pt}F_2}
\def\2F1{{}_2\hspace{-1pt}F_1}
\def\h2F1{{}_2\hspace{-1pt}\widehat{F}_1}
\def\f{\F^*}
\def\ff{\FF^*}
\def\e{\varepsilon}
\def\co{\chi_1}
\def\oc{\overline{\chi}}
\def\no{\nu_1}
\def\ono{\overline{\no}}
\def\onu{\overline{\nu}}
\def\omu{\overline{\mu}}
\def\oA{\overline{A}}
\def\omf{\overline{M}_4}
\def\ome{\overline{M}_8}
\def\oB{\overline{B}}
\def\oC{\overline{C}}
\def\oD{\overline{D}}
\def\oJ{\overline{J}}
\def\oz{\overline{z}}
\def\opi{\overline{\pi}}
\def\l({\left(}
\def\r){\right)}
\def\bar{\begin{array}{r|}}
\def\ear{\end{array}}
\def\Re{\mathrm{\ Re \,}}
\title{SOME MIXED CHARACTER SUM IDENTITIES OF KATZ II}
\author{\\ \\ Ron Evans\\
Department of Mathematics\\
University of California at San Diego\\
La Jolla, CA  92093-0112\\
revans@ucsd.edu
\\ \\
and
\\ \\
John Greene\\
Department of Mathematics and Statistics\\
University of Minnesota--Duluth\\
Duluth, MN  55812\\
jgreene@d.umn.edu
}
\date{December 2016}
\begin{document}
\maketitle

\noindent 2010 \textit{Mathematics Subject Classification}.
11T24, 33C05.

\noindent \textit{Key words and phrases}.
Hypergeometric $\2F1$ character sums over finite fields, Gauss and Jacobi sums,
norm-restricted Gauss and Jacobi sums, 
Eisenstein sums, Hasse--Davenport theorems, quantum physics.

\newpage

\begin{abstract}
A conjecture connected with quantum physics led N. Katz to discover
some amazing mixed character sum identities over a field of $q$
elements, where $q$ is a power of a prime $p >3$. 
His proof required deep algebro-geometric techniques,
and he expressed interest in finding a more straightforward direct proof.
The first author recently 
gave such a proof of his identities when $q \equiv 1 \pmod 4$,
and this paper provides such a proof for 
the remaining case $q \equiv 3 \pmod 4$.
Our proofs are valid for all characteristics $p>2$.
Along the way we prove some elegant new character sum identities.

\end{abstract}

\maketitle

\section{Introduction}
Let $\F$ be a field of $q$ elements, 
where $q$ is a power of an odd prime $p$. 
Throughout this paper, $A$, $B$, $C$, $D$, 
$\chi$, $\lambda$, $\nu$, $\mu$, $\e$, $\phi$
denote complex multiplicative characters on $\f$, 
extended to map 0 to 0. 
Here $\e$ and $\phi$ always denote the trivial and quadratic characters, 
respectively.  
Define $\delta(A)$ to be 1 or 0 according as $A$ is trivial or not,
and let $\delta(j,k)$ denote the Kronecker delta for $j,k \in \F$.

Much of this paper deals with the extension field $\FF$ of $\F$.
Let $M_4$ denote a fixed quartic character on $\FF$ and let
$M_8$ denote a fixed octic character on $\FF$ such that
$M_8^2 = M_4$.

Define the additive character  $\psi$ on $\F$ by
\begin{equation*}
\psi(y) := 
\exp \Bigg( \frac{2 \pi i}{p} \Big( y^p + y^{p^2} + \dots + y^q \Big) \Bigg),
\quad y \in \F.
\end{equation*}
The corresponding additive character on $\FF$ will be denoted by $\psi_2$.

Recall the definitions of the Gauss and Jacobi sums over $\F$:
\begin{equation*}
G(A) = \sum_{y \in \F} A(y) \psi(y), \quad
J(A, B) = \sum_{y \in \F} A(y) B(1-y).
\end{equation*}
These sums have the familiar properties 
\[
G(\e) = -1, \quad J(\e,\e) = q-2,
\]
and for nontrivial $A$, 
\[
G(A) G(\oA) = A(-1) q, \quad J(A, \oA) = -A(-1), 
\quad J(\e, A)=-1. 
\]
Gauss and Jacobi sums are related by \cite[p. 59]{BEW}
\begin{equation*}
J(A,B) = \frac{G(A) G(B)}{G(AB)}, \quad \text{if } AB \neq \e
\end{equation*}
and 
\begin{equation*}
J(A,\oC) = \frac{A(-1)G(A) G(\oA C)}{G(C)}=A(-1)J(A,\oA C), 
\quad \text{if } C \neq \e.
\end{equation*}
The Hasse--Davenport product relation \cite[p. 351]{BEW} yields
\begin{equation}\label{eq:1.1}
A(4) G(A) G(A \phi) = G(A^2) G(\phi).
\end{equation}

As in \cite[p. 82]{TAMS}, 
define the hypergeometric $\2F1$ function over $\F$ by
\begin{equation}\label{eq:1.2}
\2F1 \l( \bar A,B \\ C \ear \ x \r)
=\frac{\e (x)}{q}
\sum_{y \in \F} B(y) \oB C(y-1)
\oA(1-xy), \quad x \in \F.
\end{equation}

For $j, k \in \F$ and $a \in \f$, 
Katz \cite[p. 224]{Katz} defined the mixed exponential sums
\begin{equation}\label{eq:1.3}
\begin{split}
P(j,k): &= \delta(j,k) + \phi(-1)\delta(j,-k) + \\
&  \frac{1}{G(\phi)}\sum_{x \in \f}
\phi(a/x - x)\psi(x(j+k)^2 + (a/x)(j-k)^2).
\end{split}
\end{equation}
Note that 
\begin{equation}\label{eq:1.4}
P(j,k)=P(k,j), \quad
P(-j,k) = \phi(-1)P(j,k).
\end{equation}
Katz proved an equidistribution conjecture of Wootters 
\cite[p. 226]{Katz}, \cite{ASSW}
connected with quantum physics
by constructing explicit character sums $V(j)$ 
 \cite[pp. 226--229]{Katz}) for which
the identities
\begin{equation}\label{eq:1.5}
P(j,k) = V(j)V(k)
\end{equation}
hold for all $j,k \in \F$.
(The $q$-dimensional vector 
$(V(j))_{j \in \F} $
is a minimum uncertainty state, as described by Sussman and
Wootters \cite{SW}.)
Katz's proof \cite[Theorem 10.2]{Katz} of the identities
\eqref{eq:1.5} required the characteristic $p$ to exceed 3,
in order to guarantee that various sheaves of ranks 2, 3, and 4
have geometric and arithmetic monodromy groups
which are SL(2), SO(3), and SO(4), respectively.

As Katz indicated in \cite[p. 223]{Katz}, his proof of
\eqref{eq:1.5} is quite complex, invoking the theory of
Kloosterman sheaves and their rigidity properties, as well as results
of Deligne \cite{Del} and Beilinson, Bernstein, Deligne \cite{BBD}.  
Katz \cite[p. 223]{Katz} wrote,
``It would be interesting to find direct proofs of these identities."

The goal of this paper is to respond to
Katz's challenge by giving a direct proof of \eqref{eq:1.5}
( a ``character sum proof"
not involving algebraic geometry).
This has the benefit of making
the demonstration of his useful identities 
accessible to a wider audience of mathematicians and physicists.
Since a direct proof for $q \equiv 1 \pmod 4$ has been given in
\cite{Evans}, we will assume from here on that $q \equiv 3 \pmod 4$.

A big advantage of our proof is that it works for
all odd characteristics $p$, including $p=3$.
As a bonus, we obtain some elegant new double character sum
evaluations in \eqref{eq:5.11}--\eqref{eq:5.14}.

Our method of proof is to show
(in Section 6) 
that the double Mellin transforms of
both sides of \eqref{eq:1.5} are equal.  
The Mellin transforms of the left and right sides of \eqref{eq:1.5}
are given in Theorems 3.1 and 5.1, respectively.
A key feature of our proof
is a formula (Theorem 4.1) relating a norm-restricted Jacobi sum over $\FF$
to a
hypergeometric $\2F1$  character sum over $\F$.
Theorem 4.1 will be applied to prove Theorem 5.3, 
an identity for a weighted sum of hypergeometric $\2F1$ 
character sums.   Theorem 5.3 is crucial
for our proof of \eqref{eq:1.5} in Section 6.

Hypergeometric character sums over finite fields have had a variety of
applications in number theory.   For some recent examples, see
\cite{BarKal}, \cite{BarSai}, \cite{ElOno}, \cite{FLRST},
\cite{LinTu}, \cite{McPap}, \cite{Salerno}.

Since $q \equiv 3 \pmod 4$, we have $\phi(-1)=-1$, and 
every element $z \in \FF$ has the form
\[
z= x + iy, \quad   x,y \in \F,
\]
where $i$ is a fixed primitive fourth root of unity in $\FF$.
Write $\oz = x - iy$ and note that $\oz = z^q$.
The restriction of $M_8$ to $\F$ equals $\e$ or $\phi$ according
as $q$ is congruent to 7 or 3 mod 8.  In particular,
\begin{equation}\label{eq:1.6}
M_8(-1) = \phi(2).
\end{equation}
For a character $C$ on $\F$, we let $CN$ denote the character on $\FF$
obtained by composing $C$ with the norm map $N$ on $\FF$ defined by
\[
Nz = z\oz \in \F.
\]
Given a character $B$ on $\F$, $BCN$ is to be interpreted as the character
$(BC)N$, i.e., $BN CN$.

For the same $a$ as in \eqref{eq:1.3}, define
\[
\tau = -\sqrt{qM_8(-a)},
\]
where the choice of square root is fixed.
Katz defined the  sums $V(j)$ to be the following norm-restricted
Gauss sums:
\begin{equation}\label{eq:1.7}
V(j): = \tau^{-1}\phi(j)
\sum_{\substack{z \in \FF \\  Nz=a}} M_8(z)\psi_2(j^2 z), \quad j \in \F.
\end{equation}
Note that 
\begin{equation}\label{eq:1.8}
V(-j) = -V(j).
\end{equation}

\section{Mellin transform of the sums $V(j)$}

This section begins  with some results related to
Gauss sums over $\FF$ that will be used in this paper.
We use the notation $G_2$ and $J_2$ for Gauss and Jacobi sums
over $\FF$, in order to distinguish them from the Gauss and Jacobi
sums $G$ and $J$ over $\F$.
For any character $\beta$ on $\FF$, we have
\begin{equation}\label{eq:2.1}
G_2(\beta) = G_2(\beta^q);
\end{equation}
for example, for a character $C$ on $\F$,
$G_2(CN M_8)$ equals $G_2(CN \ome)$ or $G_2(CN M_8^3)$
according as $q$ is congruent to 7 or 3 mod 8.
The Hasse-Davenport 
theorem on lifted Gauss sums \cite[Theorem 11.5.2]{BEW} gives
\begin{equation}\label{eq:2.2}
G_2(CN) = - G(C)^2.
\end{equation}
From \cite[(4.10)]{Hiro},
\begin{equation}\label{eq:2.3}
G_2(CN M_4) = G_2(CN \omf) = -\oC^2\phi(2)G(C^2\phi)G(\phi).
\end{equation}
For any character $\beta$ on $\FF$, define
\begin{equation}\label{eq:2.4}
E(\beta): = \sum_{y \in \F} \beta(1+iy).
\end{equation}
It is easily seen that
\begin{equation}\label{eq:2.5}
E(\beta) = \beta(2) E_2(\beta),
\end{equation}
where $E_2(\beta)$ is the Eisenstein sum
\begin{equation}\label{eq:2.6}
E_2(\beta): = \sum_{\substack{z \in \FF \\  z+z^q =1}} \beta(z).
\end{equation}
Let $\beta^*$ denote the restriction of $\beta$ to $\F$.
Applying  \cite[Theorem 12.1.1]{BEW} with $q$ in place of $p$,
we can express $E_2(\beta)$ in terms of Gauss sums when
$\beta$ is nontrivial, as follows:
\begin{equation}\label{eq:2.7}
E_2(\beta) = \begin{cases} 
G_2(\beta)/G(\beta^*) &\mbox{if } \beta^* \ne \e \\ 
-G_2(\beta)/q & \mbox{if } \beta^* = \e . \end{cases}
\end{equation}

For any character $\chi$ on $\F$, define the Mellin transform
\begin{equation}\label{eq:2.8}
S(\chi): = \sum_{j \in \f} \chi(j)V(j).
\end{equation}
In the case that $\chi$ is odd, we may write
$\chi = \phi \lambda^2$
for some character $\lambda$ on $\F$.
In that case, we may assume without loss of generality
that $\lambda$ is even, otherwise replace $\lambda$ by $\phi \lambda$.
In summary, when $\chi$ is odd, 
\begin{equation}\label{eq:2.9}
\chi = \phi \lambda^2 = \phi \nu^4, \quad \lambda = \nu^2
\end{equation}
for some character $\nu$ on $\F$.

The next theorem gives an evaluation of $S(\chi)$ in terms of Gauss sums.

\begin{thm}
If $\chi$ is even, then $S(\chi)=0$.  If $\chi$ is odd 
(so that \eqref{eq:2.9} holds), then
\begin{equation}\label{eq:2.10}
S(\chi)= 
\onu(a)\tau^{-1}G_2(\nu N M_8)+\phi\onu(a)\tau^{-1}G_2(\nu N M_8^5).
\end{equation}
\end{thm}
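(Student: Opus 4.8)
The plan is to handle the even and odd cases separately; the even case is immediate from parity, and the odd case reduces to ordinary Gauss sums over $\FF$ by a single change of variable. For the even case I would use \eqref{eq:1.8}. Replacing $j$ by $-j$ in the definition \eqref{eq:2.8} and using $\chi(-j)=\chi(j)$ (as $\chi$ is even) together with $V(-j)=-V(j)$ gives $S(\chi)=-S(\chi)$, hence $S(\chi)=0$.

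For the odd case, I would substitute the definition \eqref{eq:1.7} of $V(j)$ into \eqref{eq:2.8}. Since $\chi=\phi\lambda^2$ with $\lambda=\nu^2$ by \eqref{eq:2.9}, the character factor collapses: $\chi(j)\phi(j)=\lambda^2(j)=\nu(j^4)$. After interchanging the order of summation this yields
\[
S(\chi)=\tau^{-1}\sum_{\substack{z\in\FF\\ Nz=a}}M_8(z)\sum_{j\in\f}\nu(j^4)\,\psi_2(j^2 z).
\]
The heart of the argument is then the change of variable $w=j^2 z$ in the double sum. For $j\in\f$ one has $N(j^2)=j^4$, so $z=w/j^2$ satisfies $Nz=a$ precisely when $j^4=Nw/a$; moreover $M_8(j^2)=1$ because $j^2$ is a square in $\f$ and the restriction of $M_8$ to $\f$ is $\e$ or $\phi$. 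Hence $M_8(z)=M_8(w)$ and $\nu(j^4)=\nu N(w)\,\onu(a)$.

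The delicate point, and the step I expect to be the main obstacle, is the combinatorics of this substitution. Because $q\equiv 3\pmod 4$, the only fourth roots of unity in $\f$ are $\pm 1$, so $j\mapsto j^4$ is two-to-one and the fourth powers coincide with the nonzero squares. Consequently the map $(j,z)\mapsto w$ is two-to-one onto the set of $w\in\ff$ for which $Nw/a$ is a square, i.e. $\phi(Nw)=\phi(a)$; getting this multiplicity and the resulting norm restriction exactly right is the crux. Collecting terms gives
\[
S(\chi)=2\tau^{-1}\onu(a)\sum_{\substack{w\in\ff\\ \phi(Nw)=\phi(a)}}\nu N(w)\,M_8(w)\,\psi_2(w).
\]

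Finally I would detect the condition $\phi(Nw)=\phi(a)$ with the indicator $\tfrac12\bigl(1+\phi(a)\phi(Nw)\bigr)$. Since $\phi N$ is the quadratic character on $\FF$, namely $M_8^4$, one has $\phi(a)\phi(Nw)=\phi(a)M_8^4(w)$, so extending the sum to all of $\FF$ (the characters annihilate $w=0$) splits it as
\[
\tfrac12\Bigl(\sum_{w\in\FF}\nu N(w)M_8(w)\psi_2(w)+\phi(a)\sum_{w\in\FF}\nu N(w)M_8^5(w)\psi_2(w)\Bigr)=\tfrac12\bigl(G_2(\nu N M_8)+\phi(a)G_2(\nu N M_8^5)\bigr).
\]
Substituting back cancels the factor $2$ and produces exactly \eqref{eq:2.10}.
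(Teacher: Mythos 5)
Your proposal is correct and is essentially the paper's own proof: both arguments hinge on absorbing $j^2$ into the $\FF$-variable (the paper substitutes $z \mapsto z/j^2$, you set $w = j^2 z$) and on the identity $\phi N = M_8^4$, arriving at the same pair of Gauss sums $G_2(\nu N M_8)$ and $G_2(\nu N M_8^5)$. The only divergence is bookkeeping: the paper detects the restriction $Nz = a$ by inserting $\frac{1}{q-1}\sum_C C(N(z)/a)$ and lets orthogonality in $j$ single out $C \in \{\nu, \phi\nu\}$, whereas you count the two-to-one fibers of $(j,z) \mapsto j^2 z$ directly (valid since $\gcd(4,q-1)=2$ when $q \equiv 3 \pmod 4$) and split the residual condition $\phi(Nw) = \phi(a)$ with the quadratic indicator $\tfrac12\bigl(1+\phi(a)M_8^4(w)\bigr)$ --- a correct, slightly more hands-on route to the same computation.
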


\begin{proof}
If $\chi$ is even, then $S(\chi)$ vanishes by (1.8) and (2.8).
Now assume that $\chi$ is odd, so that $\chi = \phi \nu^4$.  Then
\begin{equation*}
S(\chi)=\frac{\tau^{-1}}{q-1}\sum_{z \in \FF} \sum_{j \in \f}
M_8(z)\psi_2(z j^2) \nu^4(j) \sum_{C} C(N(z)/a).
\end{equation*}
Replace $z$ by $z/j^2$ to get
\begin{equation*}
S(\chi)=\frac{\tau^{-1}}{q-1}\sum_{C} \oC(a)\sum_{z \in \FF} 
M_8(z)\psi_2(z)CN(z)\sum_{j \in \f} \nu^4 \oC^4(j).
\end{equation*}
The sum on $j$ on the right equals $q-1$ when $C \in \{\nu, \phi \nu\}$
and it equals $0$ otherwise.   Since $\phi N = M_8^4$, the result
now follows from the definition of $G_2$.
\end{proof}

\section{Double Mellin transform of $V(j)V(k)$}

For characters $\chi_1, \chi_2$, define the double Mellin transform
\begin{equation}\label{eq:3.1}
S=S(\chi_1, \chi_2): = \sum_{j, k \in \f} \chi_1(j)\chi_2(k)V(j)V(k).
\end{equation}
As in \eqref{eq:2.9},
when $\chi_1$ and $\chi_2$ are both odd,
\begin{equation}\label{eq:3.2}
\chi_i = \phi \lambda_i^2 = \phi \nu_i^4, \quad \lambda_i = \nu_i^2,
\quad i=1,2,
\end{equation}
for some characters $\nu_1$, $\nu_2$ on $\F$.
In this case, write
\begin{equation}\label{eq:3.3}
\mu = \nu_1 \nu_2.
\end{equation}

The following theorem evaluates $S$ in terms of Gauss and Jacobi
sums.

\begin{thm}
If $\chi_1$ or $\chi_2$ is even, then $S=0$.
If $\chi_1$ and $\chi_2$ are both odd
(so that \eqref{eq:3.2} and \eqref{eq:3.3} hold), then
\begin{equation}\label{eq:3.4}
S=\sum_{i=0}^{1} \phi^i \omu(a)\frac{q }{G_2(\phi^i \omu N)}
\{J_2(\nu_1 N M_8, \phi^i \omu N) + J_2(\nu_1 N M_8^5, \phi^i \omu N)\}. 
\end{equation}
\end{thm}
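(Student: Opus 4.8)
The plan is to exploit the factorization of \eqref{eq:3.1}: since $\co(j)\ct(k)V(j)V(k)$ separates as a function of $j$ times a function of $k$, the double sum splits as $S(\co,\ct)=S(\co)\,S(\ct)$, where $S(\cdot)$ is the single Mellin transform \eqref{eq:2.8}. If $\co$ or $\ct$ is even, the corresponding factor vanishes by Theorem 2.1, giving $S=0$ and the first assertion. So I assume both characters odd, so that \eqref{eq:3.2}--\eqref{eq:3.3} hold, and substitute the two-term evaluation \eqref{eq:2.10} into each factor.

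Multiplying the two copies of \eqref{eq:2.10} and using $\ono(a)\ont(a)=\omu(a)$ together with $\phi^2=\e$, the four resulting products collapse into two groups according to the presence of an extra factor $\phi(a)$: an $\omu(a)$-group built from the ``diagonal'' products $G_2(\no N M_8)G_2(\nt N M_8)+G_2(\no N M_8^5)G_2(\nt N M_8^5)$, and a $\phi\omu(a)$-group built from the ``cross'' products $G_2(\no N M_8)G_2(\nt N M_8^5)+G_2(\no N M_8^5)G_2(\nt N M_8)$, each scaled by $\tau^{-2}$. On the right of \eqref{eq:3.4} I would expand every Jacobi sum via $J_2(A,B)=G_2(A)G_2(B)/G_2(AB)$. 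Writing $\mu=\no\nt$ and using $\phi N=M_8^4$, the product character reduces to $\no N M_8^{1+4i}\cdot\omu N=\ont N M_8^{1+4i}$ (and similarly $\ont N M_8^{5+4i}$ for the second Jacobi sum), so the factor $G_2(\phi^i\omu N)$ cancels and the $i$-th block of the right side becomes
\[
\phi^i\omu(a)\,q\left\{\frac{G_2(\no N M_8)}{G_2(\ont N M_8^{1+4i})}+\frac{G_2(\no N M_8^5)}{G_2(\ont N M_8^{5+4i})}\right\},
\]
with exponents read modulo $8$.

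The remaining task is to reconcile these reciprocal Gauss sums with the Gauss-sum products of the left side. Here I would use the Frobenius invariance \eqref{eq:2.1}, $G_2(\beta)=G_2(\beta^q)$, with $M_8^q=\ome=M_8^7$ if $q\equiv7\pmod 8$ and $M_8^q=M_8^3$ if $q\equiv3\pmod 8$, to rewrite $G_2(\ont N M_8)$ and $G_2(\ont N M_8^5)$ as $G_2(\overline{\nt N M_8})$ or $G_2(\overline{\nt N M_8^5})$, the correspondence depending on $q\bmod 8$. The conjugate relation $G_2(\beta)G_2(\overline{\beta})=\beta(-1)q^2$ then converts each reciprocal into a true Gauss-sum product, producing $G_2(\no N M_8^{\,\cdot})G_2(\nt N M_8^{\,\cdot})$ divided by $\phi(2)\,q$, since $(\nt N M_8^{2k+1})(-1)=M_8(-1)=\phi(2)$ by \eqref{eq:1.6}. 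Substituting $\tau^2=qM_8(-a)$ together with the case values $M_8(-a)=1$ (when $q\equiv7$, so $\phi(2)=1$) or $M_8(-a)=-\phi(a)$ (when $q\equiv3$, so $\phi(2)=-1$) then reduces both sides to the same collection of four Gauss-sum products. The main obstacle is the bookkeeping across the two congruence classes: for $q\equiv7$ the matching is block-by-block, but for $q\equiv3$ the Frobenius step interchanges the roles of $M_8$ and $M_8^5$, so the $i=0$ block of \eqref{eq:3.4} matches the $\phi(a)$-group on the left while the $i=1$ block matches the $\omu(a)$-group; one must verify that the total sum over $i=0,1$ still agrees once the signs from $\phi(2)=-1$ and $M_8(-a)=-\phi(a)$ are tracked. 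Keeping these signs and the re-pairing straight is the delicate part; the nontriviality of $\ont N M_8$ and $\ont N M_8^5$ needed for the Jacobi and conjugate relations is routine, as $M_8$ restricts nontrivially to the norm-one subgroup of $\ff$.
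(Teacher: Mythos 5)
Your proposal is correct and follows essentially the same route as the paper: factor $S=S(\co)S(\ct)$, invoke Theorem 2.1 for both the vanishing and the two-term formula \eqref{eq:2.10}, then verify equivalence with \eqref{eq:3.4} via \eqref{eq:2.1}, the relation $G_2(\beta)G_2(\obeta)=\beta(-1)q^2$, and the case values of $M_8(-a)$. The paper compresses this verification into ``a straightforward computation with the aid of \eqref{eq:2.1},'' noting exactly the same cross-pairing you identify for $q\equiv 3\pmod 8$ (its $i=0$ bracket in \eqref{eq:3.4} matching the $i=1$ bracket of the expanded product), so your careful sign and re-pairing bookkeeping simply makes the paper's omitted computation explicit.
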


\begin{proof}
By \eqref{eq:3.1}, $S=S(\chi_1)S(\chi_2)$. By Theorem 2.1, 
$S=0$ when $\chi_1$ or $\chi_2$ is even.  Thus assume that
$\chi_1$ and $\chi_2$ are both odd.  Then Theorem 2.1 yields
\begin{equation}\label{eq:3.5}
\begin{split}
S=&\sum_{i=0}^{1} \phi^i \omu(a)\frac{M_8(-a) }{q} \ \times \\
&\times \ \{G_2(\nu_1 N M_8)G_2( \phi^i \mu \ono N M_8) + 
G_2(\nu_1 N M_8^5)G_2( \phi^i \mu \ono N M_8^5)\}.
\end{split}
\end{equation}
A straightforward computation
with the aid of \eqref{eq:2.1}
shows that \eqref{eq:3.5} is equivalent to  \eqref{eq:3.4}.
The computation is facilitated by noting that
$M_8(-a)$ equals $1$ or $-\phi(a)$ according as
$q$ is congruent to 7 or 3 mod 8, 
so that the bracketed expression for $i=0$ in \eqref{eq:3.4}
is to be compared to that for  $i=1$ in \eqref{eq:3.5}
when $q$ is congruent to 3 mod 8.
\end{proof}

\section{Identity for a norm-restricted Jacobi sum in terms of a $\2F1$}

Let $D$ be a character on $\F$. Define the norm-restricted Jacobi sums
\begin{equation}\label{eq:4.1}
R(D,j):=\sum_{\substack{z \in \FF \\ N(z)=j^4}}
M_8(z)\oD N(1-z), \quad j \in \f.
\end{equation}

The next theorem provides a formula expressing $R(D,j)$
in terms of a $\2F1$ hypergeometric character sum.

\begin{thm}
For $j = \pm 1$, 
\begin{equation}\label{eq:4.2}
R(D,j) = -\oD(4)J(\phi D^2, \phi).
\end{equation}
For all other $j \in \f$,
\begin{equation}\label{eq:4.3}
R(D,j)=-\phi(j) q \oD^4(j-1)
\2F1 \l( \bar D,D^2\phi \\ D\phi \ear \
-\left(\frac{j+1}{j-1}\right)^2  \r).
\end{equation}
\end{thm}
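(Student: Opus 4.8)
\emph{Overview.} The plan is to exploit the substitution $z=j^2 w$ with $N(w)=1$, which is legitimate because $N(j^2)=j^4$ and $M_8(j^2)=1$ (as $j^2$ is a square and $M_8$ restricts to $\e$ or $\phi$ on $\F$). This turns $R(D,j)$ into a sum over the norm-one ``circle'' $\{w\in\FF : N(w)=1\}$, on which $N(1-z)=1-j^2(w+\bar w)+j^4$. I would parametrize the circle by $w=(s+i)/(s-i)$ for $s\in\F$, together with the single point $w=1$; then $w+\bar w=2(s^2-1)/(s^2+1)$. The key simplification is that on the circle the octic character collapses to a quartic one: writing $s-i=\overline{s+i}$ gives $M_8(w)=M_8(s+i)^{1-q}$, which equals $M_4(s+i)$ or $\overline{M_4}(s+i)$ according as $q\equiv 7$ or $q\equiv 3\pmod 8$. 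Thus $R(D,j)$ becomes a one-dimensional character sum over $\F$ twisted by the quartic character $M_4^{\pm1}(s+i)$.

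\emph{The case $j=\pm1$.} Here $N(z)=1$, the point $w=1$ contributes $0$, and $N(1-w)=2-(w+\bar w)=4/(s^2+1)$, so the sum reduces to $R(D,\pm1)=\oD(4)\sum_{s\in\F}(M_4^{\pm1}\,DN)(s+i)$. Writing $s+i=i(1-is)$ and replacing $s$ by $-s$ identifies the inner sum with the Eisenstein-type sum $E(\beta)$ of \eqref{eq:2.4} for $\beta=M_4^{\pm1}DN$, up to the factor $\beta(i)$. I would then apply \eqref{eq:2.5} and \eqref{eq:2.7} to write $E(\beta)=\beta(2)E_2(\beta)$ in terms of $G_2(\beta)$, and evaluate $G_2(M_4^{\pm1}DN)$ by \eqref{eq:2.3}. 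The extraneous phases cancel because $\beta(2)=D(4)$ and, crucially, $M_4(i)=M_8(i)^2=M_8(-1)=\phi(2)$ by \eqref{eq:1.6}; what remains is exactly $-\oD(4)J(\phi D^2,\phi)$. (The degenerate subcases $D\in\{\e,\phi\}$ fall under the second line of \eqref{eq:2.7} and are checked directly.)

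\emph{The case $j\neq\pm1$.} Now $N(1-z)=\big((1-j^2)^2 s^2+(1+j^2)^2\big)/(s^2+1)$, and factoring out $(1-j^2)^2$ gives $R(D,j)=\oD^2(1-j^2)\,(1+\Sigma)$, where, with $r=(1+j^2)/(1-j^2)$,
\[
\Sigma=\sum_{s\in\F}(M_4^{\pm1}DN)(s+i)\,\overline{DN}(s+ir).
\]
The plan is to evaluate this twisted two-point sum by multiplicative Fourier (Gauss-sum) inversion over $\FF$, reducing the $s$-sum to a trace-zero condition and then collapsing the resulting $\FF$-Gauss sums to Gauss sums over $\F$ via \eqref{eq:2.2} and \eqref{eq:2.3}. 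The quartic twist $M_4^{\pm1}$ is precisely what converts, through the Hasse--Davenport relation \eqref{eq:1.1}, into the quadratic character $\phi$ that appears in the parameters $D^2\phi$, $D\phi$ of the target $\2F1$. Matching the outcome to the defining sum \eqref{eq:1.2} then yields the stated formula, after absorbing $\oD^2(1-j^2)$ and the Gauss-sum prefactors into $-\phi(j)\,q\,\oD^4(j-1)$.

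\emph{Main obstacle.} The delicate point is the evaluation of $\Sigma$ together with the attendant \emph{quadratic} hypergeometric transformation. My parametrization produces an argument symmetric under $j\mapsto -j$, namely $-\big((1+j^2)/(1-j^2)\big)^2$, whereas the theorem records the argument $-\big((j+1)/(j-1)\big)^2$; these are related by the identity $-\big((1+j^2)/(1-j^2)\big)^2=(1-x)^2/(4x)$ with $x=-\big((j+1)/(j-1)\big)^2$, the finite-field shadow of a classical quadratic transformation, and it is the quartic character $M_4^{\pm1}$ that implements this halving of the argument. Executing this cleanly, converting the quartic twist into the quadratic $\phi$, and keeping accurate track of the $q\equiv 3$ versus $q\equiv 7\pmod 8$ cases (the sign in $M_4^{\pm1}$ and the value $M_8(-1)=\phi(2)$) together with the phases $\oD(4)$, $\phi(j)$ and the factor $q$, is where essentially all the work lies.
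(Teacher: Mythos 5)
Your handling of the case $j=\pm1$ is sound, and it is essentially the paper's own argument in different clothing: the paper collapses $M_8$ to $M_4$ on the norm-one circle by writing each $z$ with $N(z)=1$ as a square, whereas you use $w=(s+i)/(s-i)=(s+i)^{1-q}$, so that $M_8(w)=M_4^{\pm 1}(s+i)$; both routes terminate in the Eisenstein sums of \eqref{eq:2.5}--\eqref{eq:2.7} evaluated by \eqref{eq:2.3}, and your phase bookkeeping ($\beta(i)=M_4^{\pm1}(i)=M_8(-1)=\phi(2)$ by \eqref{eq:1.6}, $\beta(2)=D(4)$) checks out, including the degenerate subcases $D^2=\e$. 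Likewise your reduction for general $j$ is correct: $N(1-j^2w)=((1-j^2)^2s^2+(1+j^2)^2)/(s^2+1)$, giving $R(D,j)=\oD^2(1-j^2)(1+\Sigma)$ with $\Sigma=\sum_s (M_4^{\pm1}DN)(s+i)\,\oD N(s+ir)$, $r=(1+j^2)/(1-j^2)$; after the paper's substitution $y\mapsto yx$ its display (4.5) is exactly this same one-variable twisted sum.

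The gap is that for $j\ne\pm1$ the proposal stops precisely where the theorem's content begins. ``Multiplicative Fourier (Gauss-sum) inversion reducing to a trace-zero condition'' is not carried out, and there is reason to doubt it suffices without a further idea: inverting the two characters at $s+i$ and $s+ir$ and summing over $s$ returns a norm-restricted Jacobi-type sum over $\FF$ --- an object of the same species as $R(D,j)$ in \eqref{eq:4.1} itself --- so the method risks circularity rather than producing the $\2F1$ of \eqref{eq:1.2} at the argument $-((j+1)/(j-1))^2$. The missing ingredient is exactly the finite-field quadratic transformation you flag as the ``main obstacle'': the paper supplies it by expanding $\oD\bigl(1+4j^2/((1+y^2)(1-j^2)^2)\bigr)$ with Greene's binomial theorem, reshuffling the product of binomial coefficients via \cite[(2.15)]{TAMS}, and then invoking \cite[Theorem 4.16]{TAMS}, whose double-binomial sum $\sum_\chi\binom{D^2\phi\chi^2}{\chi}\binom{D^2\phi\chi}{D\phi\chi}\chi\bigl(-(1-j^2)^2/(16j^2)\bigr)$ is precisely the quadratic-transformed representation of the $\2F1$: note the character argument there is a function of $j^2$ (symmetric under $j\mapsto-j$, matching your $\Sigma$), and it is that theorem which converts it into the $\2F1$ at $-((j+1)/(j-1))^2$ with the prefactor $-\phi(j)q\oD^4(j-1)$, the delta-terms of \cite[(2.15)]{TAMS} and of Theorem 4.16 generating the boundary contributions that absorb $\oD^2(1-j^2)$ and the term $D(-16j^2)\oD^4(1-j^2)$. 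Your identity $-\bigl((1+j^2)/(1-j^2)\bigr)^2=(1-x)^2/(4x)$ with $x=-((j+1)/(j-1))^2$ is correct, but stating it is not proving the transformation; without importing \cite[Theorem 4.16]{TAMS} (or the quadratic $\2F1$ transformation of \cite{EG}) and doing this delta-term bookkeeping --- which you yourself concede is ``where essentially all the work lies'' --- the main assertion \eqref{eq:4.3} is not established.
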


\begin{proof}
Replace $z$ in \eqref{eq:4.1} by $-zj^2$.
By \eqref{eq:1.6}, we obtain
\[
R(D,j)=\phi(2)\sum_{\substack{z \in \FF \\ N(z)=1}}
M_8(z)\oD N(1+zj^2).
\]
Each $z$ in the sum must be a square, since $N(z)$ is a square in $\F$. Thus
\[
R(D,j)=\frac{\phi(2)}{2}\sum_{\substack{z \in \FF \\ N(z)=1}}
M_4(z)\oD N(1+z^2j^2).
\]
Writing $z=x+iy$, we have
\[
R(D,j)=\frac{\phi(2)}{2}\sum_{x^2+y^2=1}
M_4(x+iy)\oD ((1-j^2)^2 + 4 j^2 x^2),
\]
where it is understood that the sum is over all $x,y \in \F$
for which $x^2 + y^2 =1$.
Thus, since $M_4(\pm i)=M_8(-1)=\phi(2)$,
\begin{equation}\label{eq:4.4}
R(D,j) = \oD^2(1-j^2) + Q(D,j),
\end{equation}
where
\[
Q(D,j)=\frac{\phi(2)}{2}\sum_{\substack{x^2+y^2=1 \\ x \ne 0}}
M_4(x+iy)\oD ((1-j^2)^2 + 4 j^2 x^2).
\]
Replacing $y$ by $yx$, we have
\[
Q(D,j)=\frac{\phi(2)}{2}\sum_{\substack{1+y^2=x^{-2} \\ x \ne 0}}
M_4(1+iy)\oD ((1-j^2)^2 + 4 j^2 /(1+ y^2)).
\]
Since $\omf = \phi N M_4$, this yields
\begin{equation}\label{eq:4.5}
Q(D,j)=\frac{\phi(2)}{2}\sum_{y \in \F}
\{M_4(1+iy)+\omf(1+iy)\} \oD ((1-j^2)^2 + 4 j^2 /(1+ y^2)).
\end{equation}

First consider the case where $j = \pm 1$.
By \eqref{eq:4.4} and \eqref{eq:4.5},
\[
R(D,j)=Q(D,j) = \frac{\oD(4) \phi(2)}{2}\sum_{y \in \F}
\{DN M_4(1+iy)+DN \omf(1+iy)\}.
\]
By \eqref{eq:2.4} and \eqref{eq:2.5},
\begin{equation}\label{eq:4.6}
R(D,j)=\frac{\phi(2)}{2}\{E_2(DN M_4) + E_2(DN \omf)\}.
\end{equation}
The restriction of $DN M_4$ to $\F$ is $D^2$.   Thus by \eqref{eq:2.7},
\[
R(D,j)=\frac{\phi(2)}{2 G(D^2)}\{G_2(DN M_4) + G_2(DN \omf)\},
\]
if $D^2$ is nontrivial, and
\[
R(D,j)=\frac{\phi(2)}{-2 q}\{G_2(DN M_4) + G_2(DN \omf)\},
\]
if $D^2$ is trivial.
By \eqref{eq:2.3},
\[
G_2(DN M_4) = G_2(DN \omf) = -\oD(4) \phi(2)G(D^2 \phi)G(\phi).
\]
Consequently,
\begin{equation}\label{eq:4.7}
R(D,j) = -\oD(4)J(\phi D^2, \phi)
\end{equation}
for every $D$, which completes the proof when $j = \pm 1$.
Thus assume for the remainder of this proof that $j^2 \ne 1$.

By \eqref{eq:4.5}, $Q(D,j)$ equals
\[
\frac{\oD^2(1-j^2)}{2\phi(2)}\sum_{y \in \F}
\{M_4(1+iy)+\omf(1+iy)\} 
\oD \left(1 + \frac{4 j^2}{(1+ y^2)(1-j^2)^2}\right).
\]
By the ``binomial theorem" \cite[(2.10)]{TAMS}, the rightmost
factor above equals
\[
\frac{q}{q-1}\sum_{\chi}
\begin{pmatrix} D \chi \\ \chi \end{pmatrix}
\chi\left(\frac{-4 j^2}{(1+ y^2)(1-j^2)^2}\right),
\]
where the ``binomial coefficient" over $\F$ is defined by 
\cite[p. 80]{TAMS}
\begin{equation*}
\begin{pmatrix} A \\ B \end{pmatrix}
= \frac{B(-1)}{q} J(A, \overline{B}).
\end{equation*}
Replacing $\chi$ with $\oc$ and observing that \cite[p. 80]{TAMS}
\[
\begin{pmatrix} D \oc \\ \oc \end{pmatrix}=
D(-1)\begin{pmatrix}  \chi \\ \oD \chi \end{pmatrix},
\]
we see that
\[
Q(D,j)=\frac{\oD^2(1-j^2)D(-1)\phi(2)q}{2(q-1)}\sum_{\chi}
\begin{pmatrix}  \chi \\ \oD \chi \end{pmatrix}
\chi\left(\frac{-(1-j^2)^2}{4j^2}\right)\kappa(\chi),
\]
where
\[
\kappa(\chi):=\sum_{y \in \F}
\{\chi N M_4(1+iy) + \chi N \omf(1+iy)\}.
\]
By \eqref{eq:2.4} and \eqref{eq:2.5},
\[
\kappa(\chi)=\chi(4)\{E_2(\chi N M_4) + E_2(\chi N \omf)\}.
\]
Comparing \eqref{eq:4.6} and \eqref{eq:4.7},
we see that
\[
\kappa(\chi)= -2\phi(2)J(\phi \chi^2, \phi)=
2q\phi(2)\oc(4)
\begin{pmatrix} \phi \chi^2 \\ \chi \end{pmatrix},
\]
where the last equality follows from the Hasse-Davenport
relation \eqref{eq:1.1}.
Consequently,
\[
Q(D,j)=\frac{\oD^2(1-j^2)D(-1)q^2}{q-1}\sum_{\chi}
\begin{pmatrix}  \chi \\ \oD \chi \end{pmatrix}
\begin{pmatrix} \phi \chi^2 \\ \chi \end{pmatrix}
\chi\left(\frac{-(1-j^2)^2}{16j^2}\right).
\]
Replace $\chi$ by $D \chi$ to get
\[
Q(D,j)=\frac{\oD(16j^2)q^2}{q-1}\sum_{\chi}
\begin{pmatrix}  D\chi \\ \chi \end{pmatrix}
\begin{pmatrix} D^2 \phi \chi^2 \\ D \chi \end{pmatrix}
\chi\left(\frac{-(1-j^2)^2}{16j^2}\right).
\]
By \cite[(2.15)]{TAMS} with $A=D \chi$, $B=\chi$, and $C=D^2 \phi \chi^2$,
\begin{equation*}
\begin{split}
&\frac{q^2}{q-1}
\begin{pmatrix}  D\chi \\ \chi \end{pmatrix}
\begin{pmatrix} D^2 \phi \chi^2 \\ D \chi \end{pmatrix} \\
&=\frac{q^2}{q-1}
\begin{pmatrix}  D^2 \phi \chi^2 \\ \chi \end{pmatrix}
\begin{pmatrix} D^2 \phi \chi \\ D \phi \chi \end{pmatrix}
-\chi(-1)\delta(D \chi) + D(-1) \delta(D^2 \phi \chi),
\end{split}
\end{equation*}
since by \cite[(2.6)]{TAMS},
\[
\begin{pmatrix} D^2 \phi \chi \\ D \end{pmatrix} =
\begin{pmatrix} D^2 \phi \chi \\ D \phi \chi \end{pmatrix}.
\]
Thus
\begin{equation}\label{eq:4.8}
\begin{split}
Q(D,j)=&\frac{\oD(16j^2)q^2}{q-1}\sum_{\chi}
\begin{pmatrix}  D^2 \phi \chi^2 \\ \chi \end{pmatrix}
\begin{pmatrix} D^2 \phi \chi \\ D \phi \chi \end{pmatrix}
\chi\left(\frac{-(1-j^2)^2}{16j^2}\right) \\
&-\oD^2(1-j^2) -D(-16j^2)\oD^4(1-j^2).
\end{split}
\end{equation}
By \cite[Theorem 4.16]{TAMS} with $A=D$, $B=\phi D^2$, and
$x = -(j+1)^2/(j-1)^2$, we have
\begin{equation*}
\begin{split}
&-\phi(j)D^4(j-1)\oD(16j^2)\frac{q}{q-1}\sum_{\chi}
\begin{pmatrix}  D^2 \phi \chi^2 \\ \chi \end{pmatrix}
\begin{pmatrix} D^2 \phi \chi \\ D \phi \chi \end{pmatrix}
\chi\left(\frac{-(1-j^2)^2}{16j^2}\right) = \\
&\2F1 \l( \bar D,D^2\phi \\ D\phi \ear \
-\left(\frac{j+1}{j-1}\right)^2  \r) -\phi(j)D(-16j^2)\oD^4(j+1)/q.
\end{split}
\end{equation*}
Multiply by $-\phi(j)q\oD^4(j-1)$ to get
\begin{equation*}
\begin{split}
&\oD(16j^2)\frac{q^2}{q-1}\sum_{\chi}
\begin{pmatrix}  D^2 \phi \chi^2 \\ \chi \end{pmatrix}
\begin{pmatrix} D^2 \phi \chi \\ D \phi \chi \end{pmatrix}
\chi\left(\frac{-(1-j^2)^2}{16j^2}\right) = \\
&-\phi(j)q\oD^4(j-1)\2F1 \l( \bar D,D^2\phi \\ D\phi \ear \
-\left(\frac{j+1}{j-1}\right)^2  \r) 
+D(-16j^2)\oD^4(1 -j^2).
\end{split}
\end{equation*}
Thus by \eqref{eq:4.8},
\begin{equation}\label{eq:4.9}
\begin{split}
& Q(D,j) + \oD^2(1-j^2) + D(-16j^2) \oD^4(1-j^2) = \\
&-\phi(j)q\oD^4(j-1)\2F1 \l( \bar D,D^2\phi \\ D\phi \ear \
-\left(\frac{j+1}{j-1}\right)^2  \r)
+D(-16j^2)\oD^4(1-j^2).
\end{split}
\end{equation}
Combining \eqref{eq:4.4} and \eqref{eq:4.9}, we arrive
at the desired result \eqref{eq:4.3}.
\end{proof}

\section{Double Mellin Transform  of $P(j,k)$}

For characters $\chi_1, \chi_2$, define the double Mellin transform
\begin{equation}\label{eq:5.1}
T=T(\chi_1, \chi_2): = \sum_{j, k \in \f} \chi_1(j)\chi_2(k)P(j,k).
\end{equation}
Note that $T(\chi_1, \chi_2)$ is symmetric in $\chi_1$, $\chi_2$.

The following theorem evaluates $T$.

\begin{thm}
If $\chi_1$ or $\chi_2$ is even, then $T=0$.
If $\chi_1$ and $\chi_2$ are both odd
(so that \eqref{eq:3.2} and \eqref{eq:3.3} hold), then
\begin{equation}\label{eq:5.2}
T=\sum_{i=0}^1 \omu \phi^i(a)\frac{G(\phi \mu^2)}{G(\phi)}
\{\sum_{j \in \f} \co(j) h(\mu \phi^i,j) +2(q-1)\delta(\mu \phi^i)\},
\end{equation}
where for a character $D$ on $\F$ and $j \in \f$, we define
\begin{equation}\label{eq:5.3}
h(D,j):=\sum_{x \in \f} D(x)\phi(1-x) \phi \oD^2(x(j+1)^2 +(j-1)^2).
\end{equation}
\end{thm}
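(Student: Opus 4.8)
\noindent\emph{Proof plan.}
The plan is to evaluate $T$ by performing the two Mellin summations over $j,k$ \emph{first}, turning the inner $\psi$-sum into Gauss sums, and only afterwards dealing with the $x$-summation in \eqref{eq:1.3}. First I would dispose of the even case. Replacing $j$ by $-j$ in \eqref{eq:5.1} and using $P(-j,k)=-P(j,k)$ from \eqref{eq:1.4} (recall $\phi(-1)=-1$) gives $T=-\co(-1)T$, so $T=0$ whenever $\co(-1)=1$; the same argument in $k$ handles $\ct$. Hence I may assume $\co,\ct$ are both odd and write them as in \eqref{eq:3.2}--\eqref{eq:3.3}, so that in particular $\co\ct=\mu^4$.

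Next I would split $P(j,k)$ according to \eqref{eq:1.3}. The two Kronecker terms contribute $(1-\ct(-1))\sum_j\co\ct(j)=2(q-1)\delta(\mu^4)$ to $T$, using $\ct(-1)=-1$. For the exponential part, I would substitute $j\mapsto kj$, which factors the argument of $\psi$ as $k^2 c(j,x)$ with $c(j,x):=x(j+1)^2+(a/x)(j-1)^2$ and turns the inner double sum into $\sum_j\co(j)\sum_k\mu^4(k)\psi(k^2 c(j,x))$. Writing $\mu^4(k)=\mu^2(k^2)$ and using the two-to-one map $k\mapsto k^2$, the $k$-sum equals $\omu^2(c)G(\mu^2)+\phi\omu^2(c)G(\phi\mu^2)$ when $c\ne 0$ (the trivial-character case being covered by $G(\e)=-1$) and $(q-1)\delta(\mu^4)$ when $c=0$.

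The key observation is a parity in $x$: since $c(j,-x)=-c(j,x)$, the weight $\phi(a/x-x)$ is odd in $x$, while $\omu^2(-1)=1$ and $\phi\omu^2(-1)=-1$. Pairing $x$ with $-x$ therefore annihilates both the $G(\mu^2)$-branch and the $c=0$ correction (both odd in $x$) and leaves only the $G(\phi\mu^2)$-branch (even in $x$). On that surviving branch I would pass to $x'=x^2/a$: the two preimages $\pm x$ contribute equally and produce the factor $1+\phi(ax')$, and a short calculation — using $\phi(a/x-x)=\phi(a)\phi(1-x')\phi(x)$, $\phi\omu^2(a/x)=\phi\omu^2(a)\phi(x)\mu^2(x)$, and $\mu^2(x)=\mu(ax')$ on the fibre $x^2=ax'$ — collapses the $a$-powers to $\omu(a)$ and rewrites the $x'$-sum as $\omu(a)\sum_{x'}\mu(x')(1+\phi(ax'))\phi(1-x')\phi\omu^2(x'(j+1)^2+(j-1)^2)$. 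Since $\phi\oD^2=\phi\omu^2$ for $D=\mu\phi^i$ and $\sum_{i=0}^1\omu\phi^i(a)\mu\phi^i(x')=\omu(a)\mu(x')(1+\phi(ax'))$, this is exactly $\sum_{i=0}^1\omu\phi^i(a)\,h(\mu\phi^i,j)$, and so the exponential part of $T$ equals $\sum_{i=0}^1\omu\phi^i(a)\frac{G(\phi\mu^2)}{G(\phi)}\sum_j\co(j)h(\mu\phi^i,j)$.

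It then remains to match the delta contribution $2(q-1)\delta(\mu^4)$ with the Kronecker terms in \eqref{eq:5.2}, and here I would use the hypothesis $q\equiv 3\pmod 4$ decisively: since $4\nmid q-1$ there is no character of order $4$, whence $\delta(\mu^4)=\delta(\mu)+\delta(\mu\phi)$. Checking that $\omu\phi^i(a)G(\phi\mu^2)/G(\phi)$ equals $1$ precisely when $\delta(\mu\phi^i)=1$ (namely $\mu=\e$ for $i=0$ and $\mu=\phi$ for $i=1$) turns $2(q-1)\delta(\mu^4)$ into $\sum_i\omu\phi^i(a)\frac{G(\phi\mu^2)}{G(\phi)}2(q-1)\delta(\mu\phi^i)$, which completes \eqref{eq:5.2}. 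I expect the main obstacle to be the correct bookkeeping of the degenerate contributions: one must verify that the unwanted $G(\mu^2)$-branch and the $c=0$ terms cancel by the $x\mapsto -x$ symmetry, and then recognize the substitution $x\mapsto x^2/a$ as the precise source of the factor $1+\phi(ax)$, and hence of the two-term sum $\sum_{i=0}^1\phi^i$, in \eqref{eq:5.2}.
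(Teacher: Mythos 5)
Your proof is correct and takes essentially the same route as the paper's: substitute $j\mapsto jk$, reduce the $k$-sum via $k\mapsto k^2$ to isolate the $G(\phi\mu^2)$ branch, pass to $x\mapsto x^2/a$ to generate the factor $1+\phi(ax)$ and hence the two-term sum over $i$, and use $q\equiv 3\pmod 4$ to split the delta terms. The only cosmetic difference is order of operations—you execute the $k$-sum into two Gauss-sum branches and kill the $G(\mu^2)$ branch and the $c=0$ correction by the $x\mapsto -x$ parity, whereas the paper keeps the factor $(1+\phi(k))$ and uses the joint negation $(k,x)\mapsto(-k,-x)$—and your bookkeeping of the degenerate contributions is sound.
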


\begin{proof}
By \eqref{eq:1.4}, $P(j, k)=-P(j,-k)$, so  $T=0$ if 
$\chi_1$ or $\chi_2$ is even.  Thus assume that
\eqref{eq:3.2} and \eqref{eq:3.3} hold.
Replacing $j$ by $jk$ in \eqref{eq:1.3}, we obtain
\begin{equation*}
\begin{split}
&T=2(q-1)\delta(\mu^4) + \\
&\frac{1}{G(\phi)}\sum_{x,j,k \in \f}
\co(j)\mu^4(k)\phi(a/x  - x)\psi(k^2(x(j+1)^2+a(j-1)^2/x)).
\end{split}
\end{equation*}
Since $\delta(\mu^4)=\delta(\mu^2)$, this becomes
\begin{equation*}
\begin{split}
&T=2(q-1)\delta(\mu^2) + \\
&\frac{1}{G(\phi)}\sum_{x,j,k \in \f}
\co(j)\mu^2(k)\phi(a/x  - x)\psi(k(x(j+1)^2+a(j-1)^2/x))(1+\phi(k)).
\end{split}
\end{equation*}
There is no contribution from the $1$ in the rightmost factor
$(1+\phi(k))$; to see this, replace $k$ and $x$ by their negatives.
Therefore,
\begin{equation*}
\begin{split}
&T=2(q-1)\delta(\mu^2) + \\
&\frac{G(\phi \mu^2)}{G(\phi)}\sum_{x,j \in \f}
\co(j)\phi(a/x  - x)\phi \omu^2(x(j+1)^2+a(j-1)^2/x).
\end{split}
\end{equation*}
It follows that
\begin{equation*}
\begin{split}
&T=2(q-1)\delta(\mu^2)\frac{G(\phi \mu^2)}{G(\phi)} + \\
&\frac{G(\phi \mu^2)}{G(\phi)}\sum_{x,j \in \f}
\co(j)\phi(a - x)\phi \omu^2(x(j+1)^2+a(j-1)^2)\mu(x)(1+\phi(x)).
\end{split}
\end{equation*}
After replacing $x$ by $ax$ and employing \eqref{eq:5.3},
the desired result \eqref{eq:5.2} readily follows.
\end{proof}

We proceed to  analyze $h(D,j)$.

\begin{lem}
We have
\begin{equation}\label{eq:5.4}
h(D,j) = -\phi(j) \oD(16) J(D, \phi), \quad \mbox{ if } j = \pm 1,
\end{equation}
and for $j \ne \pm 1$ and nontrivial $D$, we have
\begin{equation}\label{eq:5.5}
h(D,j) = \frac{G(\phi)G(D)^2}{G(\phi D^2)}\oD^4(j-1)
\2F1 \l( \bar D,D^2\phi \\ D\phi \ear \
-\left(\frac{j+1}{j-1}\right)^2  \r).
\end{equation}
Finally, if $j \ne \pm 1$ and $D$ is trivial, then $h(D,j)=0$.
\end{lem}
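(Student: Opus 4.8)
The plan is to treat the three cases in \eqref{eq:5.4}--\eqref{eq:5.5} separately, reducing the main one to the hypergeometric evaluation already appearing in \eqref{eq:4.3}. The values $j=\pm1$ I would handle by direct substitution in \eqref{eq:5.3}. When $j=-1$ the argument $x(j+1)^2+(j-1)^2$ collapses to the constant $4$, so the third factor is $\phi\oD^2(4)=\oD(16)$ and $h(D,-1)=\oD(16)\sum_{x\in\f}D(x)\phi(1-x)=\oD(16)J(D,\phi)$, which is \eqref{eq:5.4} since $\phi(-1)=-1$. When $j=1$ the argument is $4x$, and combining $D(x)$ with $\phi\oD^2(4x)$ produces $\oD(16)\phi\oD(x)$, whence $h(D,1)=\oD(16)J(\phi\oD,\phi)$; I would finish with the identity $J(\phi\oD,\phi)=-J(D,\phi)$, which for $D\neq\e,\phi$ follows from $J(A,B)=G(A)G(B)/G(AB)$ together with $G(A)G(\oA)=A(-1)q$ and $\phi(-1)=-1$, the cases $D=\e,\phi$ being checked directly from $J(\e,\phi)=-1$ and $J(\phi,\phi)=1$.

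Next, for $j\neq\pm1$ and trivial $D$, formula \eqref{eq:5.3} becomes $\sum_{x\in\f}\phi\big((1-x)(x(j+1)^2+(j-1)^2)\big)$, a Legendre-symbol sum of a quadratic with leading coefficient $-(j+1)^2$ and roots $x=1$ and $x=-(j-1)^2/(j+1)^2$. These roots are distinct precisely because $j^2\neq-1$, which is where I use $q\equiv3\pmod4$ (so $-1$ is a nonsquare). The standard evaluation of a quadratic character sum with nonzero discriminant gives $\sum_{x\in\F}\phi=-\phi(-(j+1)^2)=1$; subtracting the omitted term at $x=0$, namely $\phi((j-1)^2)=1$, leaves $h(\e,j)=0$.

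The substantive case is $j\neq\pm1$ with $D$ nontrivial. Factoring $(j-1)^2$ out of the quadratic argument pulls out $\phi\oD^2((j-1)^2)=\oD^4(j-1)$ and rewrites the last factor as $\phi\oD^2(1-sx)$ with $s=-\big((j+1)/(j-1)\big)^2\neq0$; since $D(0)=0$ lets me extend the sum to all of $\F$, comparison with the definition \eqref{eq:1.2} (using $\phi(1-x)=-\phi(x-1)$) yields
\[
h(D,j)=\oD^4(j-1)\sum_{x\in\F}D(x)\phi(1-x)\phi\oD^2(1-sx)=-q\,\oD^4(j-1)\,\2F1 \l( \bar D^2\phi,D \\ D\phi \ear \ s \r).
\]
This is the $\2F1$ of \eqref{eq:4.3} with its two numerator parameters interchanged, so I would apply the numerator-swap transformation
\[
\2F1 \l( \bar A,B \\ C \ear \ x \r)=\frac{G(B)G(A\oC)}{G(A)G(B\oC)}\,\2F1 \l( \bar B,A \\ C \ear \ x \r)
\]
with $A=D^2\phi$, $B=D$, $C=D\phi$, which produces the factor $G(D)^2/(G(D^2\phi)G(\phi))$. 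Multiplying by $-q$ and simplifying through $G(\phi)^2=\phi(-1)q=-q$ (so that $-q/G(\phi)=G(\phi)$) gives exactly the coefficient $G(\phi)G(D)^2/G(\phi D^2)$ of \eqref{eq:5.5}.

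The main obstacle is this last step: justifying the swap transformation and carrying out the Gauss-sum bookkeeping. I would obtain the transformation from the binomial-coefficient series used in the proof of Theorem 4.1 by writing each $\2F1$ as $\frac{q}{q-1}\sum_\chi$ of a product of two binomial coefficients and converting Jacobi to Gauss sums; the point to verify is that the ratio of corresponding coefficients is the single constant $G(B)G(A\oC)/(G(A)G(B\oC))$, independent of the summation character $\chi$. Here $q\equiv3\pmod4$ is again essential: it forces $D^2\neq\phi$, hence $D^2\phi\neq\e$, so that all four Gauss sums $G(A),G(B),G(A\oC),G(B\oC)$ are those of nontrivial characters and every Jacobi-to-Gauss conversion is valid (this also absorbs the borderline $D=\phi$). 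Once the coefficients match, \eqref{eq:5.5} follows, and the three cases together establish the lemma.
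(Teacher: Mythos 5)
Your proof is correct, but it takes a genuinely more self-contained route than the paper, whose own proof of this lemma is essentially three lines. For \eqref{eq:5.4} the paper merely says the evaluation ``follows directly from the definition''; your computation is exactly the omitted verification, and the one nontrivial ingredient you supply, $J(\phi\oD,\phi)=-J(D,\phi)$, is right (for $D\ne\e,\phi$ it follows as you say from $J(A,B)=G(A)G(B)/G(AB)$ and $G(A)G(\oA)=A(-1)q$ with $\phi(-1)=-1$, and your boundary checks $J(\e,\phi)=-1$, $J(\phi,\phi)=-\phi(-1)=1$ are correct). For the case $j\ne\pm1$, $D=\e$, the paper uses the substitution $x\mapsto 1-x(2j^2+2)/(j+1)^2$, reducing the completed sum to $J(\phi,\phi)=1$; your complete-quadratic-character-sum evaluation $\sum_{x\in\F}\phi(ax^2+bx+c)=-\phi(a)$ for nonzero discriminant is an equivalent alternative, and both arguments hinge on the same fact, $2j^2+2\ne0$, i.e.\ $j^2\ne-1$, guaranteed by $q\equiv3\pmod 4$. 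The real divergence is \eqref{eq:5.5}: the paper gives no argument at all, citing instead \cite[(5.21)]{Evans} with the remark that the proof there is valid for $q\equiv 3\pmod 4$ as well, whereas you derive it from scratch by matching $h(D,j)$ against definition \eqref{eq:1.2} to get $-q\,\oD^4(j-1)\,\2F1(D^2\phi,D;D\phi;s)$ with $s=-((j+1)/(j-1))^2$ (the sign from $\phi(1-x)=-\phi(x-1)$ and the extension to $x=0$ via $D(0)=0$ are both handled correctly), and then interchanging the numerator parameters. I checked your swap transformation and its proof sketch: by Greene's expansion $\2F1(A,B;C;x)=\frac{q}{q-1}\sum_\chi\binom{A\chi}{\chi}\binom{B\chi}{C\chi}\chi(x)$ \cite[Thm.~3.6]{TAMS}, each binomial coefficient converts to Gauss sums with \emph{no} delta-correction terms precisely because $J(P,Q)=G(P)G(Q)/G(PQ)$ holds whenever $PQ\ne\e$, even if $P$ or $Q$ is trivial; the needed nontrivialities for $A=D^2\phi$, $B=D$, $C=D\phi$ are $D\ne\e$ (hypothesis) and $D^2\phi\ne\e$, the latter automatic since $D^2$ is even while $\phi$ is odd for $q\equiv3\pmod 4$ --- exactly the point you flag. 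The final bookkeeping is also right: the swap factor is $G(D)^2/(G(D^2\phi)G(\phi))$, and $-q/G(\phi)=G(\phi)$ from $G(\phi)^2=\phi(-1)q=-q$ yields the coefficient $G(\phi)G(D)^2/G(\phi D^2)$ of \eqref{eq:5.5}. What your route buys is independence from the companion paper \cite{Evans} (modulo Greene's Theorem 3.6); what it costs is re-deriving a parameter-interchange identity that the normalized function $\h2F1$ of \cite{Hiro} was designed to make transparent, which is presumably why the authors felt free to outsource it.
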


\begin{proof}
The evaluation in \eqref{eq:5.4} follows directly from the definition
of $h(D,j)$ in \eqref{eq:5.3}.  The evaluation in \eqref{eq:5.5}
is the same as that in \cite[(5.21)]{Evans}, the proof of which is 
valid for $q$ congruent to either 1 or 3 mod 4.  Finally, let
$j \ne \pm 1$.  Then since
\[
h(\e,j) = -1 + \sum_{x \in \F} \phi(1-x) \phi(x(j+1)^2 + (j-1)^2),
\]
replacement of $x$ by $1-x(2j^2+2)/(j+1)^2$ shows that 
$h(\e,j)= -1 + 1 = 0$.
\end{proof}

\begin{thm}
For a character $D$ on $\F$, define
\begin{equation}\label{eq:5.6}
W(D): = \sum_{j \in \f} \phi \no^4(j)h(D,j).
\end{equation}
Then $W(\e)=2$, and for nontrivial $D$,
\begin{equation}\label{eq:5.7}
W(D)=\frac{-G(\phi)G(D)^2}{qG(\phi D^2)}
\{J_2(\no N M_8, \oD N)+ J_2(\no N M_8^5, \oD N)\}.
\end{equation}
\end{thm}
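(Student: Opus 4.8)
The plan is to dispose of the trivial character first and then, for nontrivial $D$, to recognize $h(D,j)$ as essentially the norm-restricted Jacobi sum $R(D,j)$ of Theorem 4.1, after which $W(D)$ collapses into a pair of Jacobi sums over $\FF$. For $D=\e$, I would read off from Lemma 5.3 that $h(\e,j)=0$ for every $j\neq\pm1$, while \eqref{eq:5.4} gives $h(\e,\pm1)=-\phi(\pm1)\e(16)J(\e,\phi)=\phi(\pm1)$, since $\e(16)=1$ and $J(\e,\phi)=-1$. Thus only $j=\pm1$ survive in \eqref{eq:5.6}, and using $\phi\no^4(\pm1)=\phi(\pm1)$ (because $\no(-1)^4=1$) together with $\phi(\pm1)^2=1$ yields $W(\e)=\phi(1)^2+\phi(-1)^2=2$.

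Now assume $D$ is nontrivial and set $c:=-G(\phi)G(D)^2/\bigl(qG(\phi D^2)\bigr)$. The central step is the pointwise identity
\[
h(D,j)=c\,\phi(j)\,R(D,j),\qquad j\in\f.
\]
For $j\neq\pm1$ this is immediate upon comparing \eqref{eq:5.5} with \eqref{eq:4.3}: the hypergeometric factors and the factors $\oD^4(j-1)$ agree, so the quotient of $h(D,j)$ by $R(D,j)$ is exactly $c\,\phi(j)$. For $j=\pm1$ I would instead compare \eqref{eq:5.4} with \eqref{eq:4.2}; here a short Gauss-sum computation is needed, rewriting $J(\phi D^2,\phi)$ and $J(D,\phi)$ in terms of Gauss sums, invoking $G(\phi)^2=\phi(-1)q=-q$ and the Hasse--Davenport relation \eqref{eq:1.1}, to confirm that the same constant $c$ emerges. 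The degenerate subcase $D=\phi$ (where $D^2=\e$) has to be checked separately, because the Jacobi--Gauss conversions presuppose $D^2\neq\e$ and $D\phi\neq\e$; a direct evaluation of $h(\phi,\pm1)$ and $R(\phi,\pm1)$ confirms the identity there too.

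Granting the pointwise identity, I would substitute it into \eqref{eq:5.6} and use $\phi\no^4(j)\cdot\phi(j)=\no^4(j)$ to get
\[
W(D)=c\sum_{j\in\f}\no^4(j)\,R(D,j).
\]
Unfolding $R(D,j)$ through \eqref{eq:4.1} turns the right side into a sum over pairs $(j,z)$ with $z\in\FF$ and $N(z)=j^4$. For fixed $z\in\ff$ the equation $j^4=N(z)$ has exactly $1+M_8^4(z)$ solutions $j\in\f$: since $q\equiv3\pmod4$ the map $j\mapsto j^4$ is two-to-one onto the nonzero squares of $\F$, and $\phi N=M_8^4$ detects whether $N(z)$ is a square. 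Moreover $\no^4(j)=\no N(z)$ on that solution set, so
\[
\sum_{j\in\f}\no^4(j)\,R(D,j)=\sum_{z\in\FF}\bigl(1+M_8^4(z)\bigr)\,\no N(z)\,M_8(z)\,\oD N(1-z).
\]
Splitting according to the two summands $1$ and $M_8^4(z)$ identifies the right-hand side with $J_2(\no N M_8,\oD N)+J_2(\no N M_8^5,\oD N)$, and multiplying by $c$ gives precisely \eqref{eq:5.7}.

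The step I expect to be the main obstacle is the $j=\pm1$ portion of the pointwise identity: this is the only place where genuine Gauss- and Jacobi-sum algebra is required to pin down the constant $c$, and the exceptional character $D=\phi$ (for which $D^2$ is trivial) must be verified by hand rather than by the generic Jacobi--Gauss conversions. Once that identity is in place, the remaining counting and character bookkeeping are routine.
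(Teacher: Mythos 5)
Your proof is correct and takes essentially the same route as the paper's: both use Lemma 5.2 and Theorem 4.1 to reduce $W(D)$ to $-\frac{G(\phi)G(D)^2}{qG(\phi D^2)}\sum_{j\in\f}\no^4(j)R(D,j)$, and your pointwise identity $h(D,j)=c\,\phi(j)R(D,j)$ (with the Hasse--Davenport check at $j=\pm1$, including the degenerate case $D=\phi$ that the paper's ``this simplifies to'' glosses over) is exactly the boundary-term cancellation the paper performs by completing the sum. The only cosmetic difference is the last step, where the paper converts $\sum_{j\in\f}\no^4(j)R(D,j)$ into $J_2(\no N M_8,\oD N)+J_2(\no N M_8^5,\oD N)$ by character orthogonality, while you count solutions of $j^4=N(z)$ via the factor $1+M_8^4(z)$ --- an equivalent computation.
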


\begin{proof}
It follows directly from Lemma 5.2 that $W(\e)=2$.
Let $D$ be nontrivial.   By Lemma 5.2,
\begin{equation}\label{eq:5.8}
\begin{split}
&W(D)=-2\oD(16)J(D, \phi) \ - \\
&\frac{G(\phi)G(D)^2}{qG(\phi D^2)}
\sum_{\substack{j \in \f \\ j \ne \pm 1}}
\no^4(j)\left(-q\phi(j)\oD^4(j-1)
\2F1 \l( \bar D,D^2\phi \\ D\phi \ear \
-\left(\frac{j+1}{j-1}\right)^2  \r)\right).
\end{split}
\end{equation}
Thus by \eqref{eq:4.1}--\eqref{eq:4.3},
\begin{equation*}
\begin{split}
&W(D) = -2\oD(16)J(D, \phi) -\frac{G(\phi)G(D)^2}{qG(\phi D^2)}
\sum_{\substack{j \in \f \\ j \ne \pm 1}}
\no^4(j)R(D,j) = \\
&-2\oD(16)J(D, \phi) -
\frac{G(\phi)G(D)^2}{qG(\phi D^2)}\{2\oD(4)J(\phi D^2, \phi) \  +
\sum_{j \in \f } \no^4(j)R(D,j)\}.
\end{split}
\end{equation*}
This simplifies to
\begin{equation}\label{eq:5.9}
W(D) = -\frac{G(\phi)G(D)^2}{qG(\phi D^2)}\sum_{j \in \f } \no^4(j)R(D,j).
\end{equation}
For brevity, let $Y(D)$ denote this sum on $j$.
It remains to prove that
\begin{equation}\label{eq:5.10}
Y(D): = \sum_{j \in \f } \no^4(j)R(D,j)=
J_2(\no N M_8, \oD N)+ J_2(\no N M_8^5, \oD N).
\end{equation}
Since the fourth powers in $\F$ are precisely the squares,
it follows from definition \eqref{eq:4.1} that
\[
Y(D) = \sum_{j \in \f } \no(j^2) 
\sum_{\substack{z \in \FF \\ N(z)=j^2}}
M_8(z)\oD N(1-z).
\]
Thus
\begin{equation*}
\begin{split}
Y(D)=& \frac{1}{q-1}\sum_{j \in \f } \no(j^2) 
\sum_{z \in \ff}M_8(z)\oD N(1-z)\sum_{\chi} \chi(N(z)/j^2)= \\
&\frac{1}{q-1}\sum_{\chi} J_2(\chi N M_8, \oD N) 
\sum_{j \in \f} (\no \oc)^2(j).
\end{split}
\end{equation*}
The sum on $j$ on the right vanishes unless $\chi \in \{\no, \no \phi\}$,
and so we obtain the desired result \eqref{eq:5.10}.
\end{proof}

As interesting consequences of Theorem 5.3, we record the elegant 
double character sum evaluations \eqref{eq:5.11}--\eqref{eq:5.14} below.

\begin{thm}
For any character $\nu$ on $\F$,
\begin{equation}\label{eq:5.11}
\begin{split}
\sum_{j, x \in \f} &\phi \nu^4(j) \phi(x)\phi(1-x)\phi(x(j+1)^2+(j-1)^2) = \\
&J_2(\nu N M_8, \phi N) + J_2(\nu N M_8^5, \phi N).
\end{split}
\end{equation}
\end{thm}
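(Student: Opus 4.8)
The plan is to recognize the left-hand side of \eqref{eq:5.11} as the single special value $W(\phi)$ of the function introduced in \eqref{eq:5.6}, and then to read off the answer from Theorem 5.3. First I would set $D = \phi$ in the definition \eqref{eq:5.3} of $h(D,j)$. Since $\phi^2 = \e$, the factor $\phi\oD^2$ becomes $\phi \cdot \e = \phi$, so
\[
h(\phi,j) = \sum_{x \in \f} \phi(x)\phi(1-x)\phi\big(x(j+1)^2 + (j-1)^2\big).
\]
Comparing with \eqref{eq:5.6}, with $\nu$ playing the role of $\no$, the entire double sum on the left of \eqref{eq:5.11} is exactly $W(\phi) = \sum_{j \in \f} \phi\nu^4(j)\, h(\phi,j)$.

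Next, because $q$ is odd the quadratic character $\phi$ is nontrivial, so I would apply the nontrivial case \eqref{eq:5.7} of Theorem 5.3 with $D = \phi$. Writing $\oD = \overline{\phi} = \phi$, this gives
\[
W(\phi) = \frac{-G(\phi)G(\phi)^2}{qG(\phi\cdot\phi^2)}\big\{J_2(\nu N M_8, \phi N) + J_2(\nu N M_8^5, \phi N)\big\}.
\]
It then remains only to simplify the scalar prefactor to $1$. Using $\phi^2 = \e$ (so that $\phi\cdot\phi^2 = \phi$ and one factor of $G(\phi)$ cancels) the prefactor collapses to $-G(\phi)^2/q$; and since $q \equiv 3 \pmod 4$ we have $\phi(-1) = -1$, whence $G(\phi)^2 = G(\phi)G(\overline{\phi}) = \phi(-1)q = -q$. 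Thus the prefactor equals $-(-q)/q = 1$, and \eqref{eq:5.11} follows immediately.

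There is no genuine obstacle here, since the whole argument is just the specialization $D = \phi$ of the already-established Theorem 5.3. The only points demanding care are the character-identity bookkeeping: verifying that $\phi\oD^2$ reduces to $\phi$ so the inner $x$-sum matches $h(\phi,j)$ verbatim, and tracking $\overline{\phi} = \phi$ together with $G(\phi)^2 = -q$ so that the Gauss-sum prefactor cancels exactly to $1$. It is precisely this clean cancellation — which relies on $q \equiv 3 \pmod 4$ through $\phi(-1) = -1$ — that produces the elegant form of the stated identity.
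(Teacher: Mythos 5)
Your proposal is correct and is exactly the paper's proof, which simply says ``This follows by putting $D=\phi$ in \eqref{eq:5.7}''; you have merely spelled out the routine verifications (that $\phi\oD^2$ reduces to $\phi$ so the $x$-sum is $h(\phi,j)$, and that the prefactor collapses to $1$ via $G(\phi)^2=\phi(-1)q=-q$ for $q\equiv 3\pmod 4$). All of these details check out, so there is nothing to add.
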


\begin{proof}
This follows by putting $D=\phi$ in \eqref{eq:5.7}.
\end{proof}

\begin{thm}
When $q \equiv 7 \pmod 8$, we have
\begin{equation}\label{eq:5.12}
\sum_{j, x \in \f} \phi(jx)\phi(1-x)\phi(x(j+1)^2+(j-1)^2)=2q.
\end{equation}
When $q \equiv 3 \pmod 8$, we have
\begin{equation}\label{eq:5.13}
\sum_{j, x \in \f} \phi(jx)\phi(1-x)\phi(x(j+1)^2+(j-1)^2)=2u,
\end{equation}
where $|u|$, $|v|$ is the 
unique pair of positive integers with 
$p \nmid u$ for which $q^2=u^2 + 2v^2$,
and where the sign of $u$ is determined by the congruence
$u \equiv -1 \pmod 8$.
In particular, when $q=p \equiv 3 \pmod 8$, we have
\begin{equation}\label{eq:5.14}
\sum_{j, x \in \f} \phi(jx)\phi(1-x)\phi(x(j+1)^2+(j-1)^2)=4a_8^2 -2p,
\end{equation}
where $p = a_8^2 + 2b_8^2$.
\end{thm}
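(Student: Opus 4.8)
The plan is to observe that the common left-hand side of \eqref{eq:5.12}--\eqref{eq:5.14} is exactly the $\nu=\e$ case of \eqref{eq:5.11}. Setting $\nu=\e$ in Theorem 5.4 makes $\phi\nu^4=\phi$ and $\nu N$ trivial, so that $\phi(j)\phi(x)=\phi(jx)$ and the left-hand side equals $J_2(M_8,\phi N)+J_2(M_8^5,\phi N)$. I would then use that $\phi N=M_8^4$ is the quadratic character of $\FF$ and pass to Gauss sums via $J_2(\beta_1,\beta_2)=G_2(\beta_1)G_2(\beta_2)/G_2(\beta_1\beta_2)$, noting that the products $M_8\cdot M_8^4=M_8^5$ and $M_8^5\cdot M_8^4=M_8$ are both nontrivial, so the formula applies.

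Next I would record the accessible Gauss sums. Equation \eqref{eq:2.2} with $C=\phi$ gives $G_2(\phi N)=-G(\phi)^2=q$ (since $q\equiv3\pmod4$), and \eqref{eq:2.3} with $C=\e$ gives $G_2(M_4)=\phi(2)q$. The $\FF$-analogue of the Hasse--Davenport product relation \eqref{eq:1.1}, applied with the quadratic character $M_8^4$ and $A=M_8$ (using $M_8(4)=1$), yields $G_2(M_8)G_2(M_8^5)=G_2(M_4)G_2(\phi N)=\phi(2)q^2$. Substituting these, the sum collapses to $\big(G_2(M_8)^2+G_2(M_8^5)^2\big)/(\phi(2)q)$.

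The two residue classes then split. For $q\equiv7\pmod8$ we have $\phi(2)=1$ and, by \eqref{eq:2.1}, $G_2(M_8)=G_2(\overline{M_8})$; combined with $G_2(M_8)G_2(\overline{M_8})=M_8(-1)q^2=q^2$ this forces $G_2(M_8)^2=q^2$, and likewise $G_2(M_8^5)^2=q^2$, so the sum is $2q$, proving \eqref{eq:5.12}. For $q\equiv3\pmod8$ we have $\phi(2)=-1$ and $M_8^q=M_8^3$; the conjugation rule $\overline{G_2(M_8)}=M_8(-1)G_2(\overline{M_8})=-G_2(M_8^5)$ (the last step using \eqref{eq:2.1} for $M_8^5$) shows $G_2(M_8^5)=-\overline{G_2(M_8)}$, whence $G_2(M_8)^2+G_2(M_8^5)^2=2\Re\,G_2(M_8)^2$. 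Using $G_2(M_8)^2=G_2(M_4)J_2(M_8,M_8)=-q\,J_2(M_8,M_8)$, the sum reduces to $2\Re\,J_2(M_8,M_8)$.

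It remains to evaluate $\Re\,J_2(M_8,M_8)$, which I expect to be the crux. The Jacobi sum lies in $\mathbb{Z}[\zeta_8]$, and the relation $G_2(M_8)=G_2(M_8^3)$ from \eqref{eq:2.1} (valid since $q\equiv3\pmod8$) gives $J_2(M_8^3,M_8^3)=J_2(M_8,M_8)$; thus $J_2(M_8,M_8)$ is fixed by $\zeta_8\mapsto\zeta_8^3$, which generates $\mathrm{Gal}(\mathbb{Q}(\zeta_8)/\mathbb{Q}(\sqrt{-2}))$, so $J_2(M_8,M_8)\in\mathbb{Z}[\sqrt{-2}]$. Writing $J_2(M_8,M_8)=u+v\sqrt{-2}$ and using $|J_2(M_8,M_8)|=q$ gives $u^2+2v^2=q^2$ with $\Re\,J_2(M_8,M_8)=u$; the condition $p\nmid u$ selects the primitive representation (unique up to the signs of $u,v$, as $\mathbb{Z}[\sqrt{-2}]$ has units $\pm1$), so the sum is $2u$. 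The main obstacle will be pinning the sign of $u$, i.e.\ the congruence $u\equiv-1\pmod8$; I would obtain it from a Stickelberger-type congruence for $J_2(M_8,M_8)$ modulo the prime above $2$, equivalently from a direct reduction of the defining sum modulo $2$ in $\mathbb{Z}[\sqrt{-2}]$. Finally, \eqref{eq:5.14} follows from \eqref{eq:5.13} by inserting $q=p=a_8^2+2b_8^2$ into $q^2=(a_8^2-2b_8^2)^2+2(2a_8b_8)^2$: the representation with $p\nmid u$ has $u=a_8^2-2b_8^2$, which indeed satisfies $u\equiv-1\pmod8$ (as $a_8,b_8$ are both odd), so $2u=4a_8^2-2p$.
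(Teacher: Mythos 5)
Your reductions are correct as far as they go. For $q \equiv 7 \pmod 8$ your argument is complete, and slightly different in flavor from the paper's: you route everything through the $\FF$-analogue of the Hasse--Davenport product relation to get $G_2(M_8)G_2(M_8^5)=\phi(2)q^2$ and then $G_2(M_8)^2=G_2(M_8^5)^2=q^2$, whereas the paper just notes $G_2(M_8)=G_2(M_8^5)$ and $G_2(\phi N)=q$, so that each Jacobi sum equals $q$ outright; the two computations are equivalent. For $q \equiv 3 \pmod 8$, your reduction to $2\Re J_2(M_8,M_8)$ is also correct and consistent with the paper's $2 \Re J_2(M_8,\phi N)$, since in this case $J_2(M_8,\phi N)=-G_2(M_8)^2/q=J_2(M_8,M_8)$; and your Galois argument (fixing by $\zeta_8 \mapsto \zeta_8^3$) placing $J_2(M_8,M_8)$ in $\mathbb{Z}[\sqrt{-2}]$ is sound.

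However, the two arithmetic facts that are the actual content of \eqref{eq:5.13} are not proved in your proposal. First, $p \nmid u$: you say this condition ``selects the primitive representation,'' but what must be shown is that the Jacobi sum itself furnishes such a representation, i.e., that $p \nmid J_2$; since $p \mid u$ forces $p \mid v$ and hence $p \mid J_2$, ruling this out requires knowing that only one of the two primes $\pi,\overline{\pi}$ above $p$ in $\mathbb{Z}[\sqrt{-2}]$ divides $J_2$ --- a factorization (Stickelberger-type) fact, not a consequence of uniqueness of primitive representations. Second, and as you yourself flag, the sign congruence $u \equiv -1 \pmod 8$ is left as a plan rather than a proof. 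A direct reduction of the defining sum modulo $2$ (pairing $z \leftrightarrow 1-z$, with fixed point $z=1/2$ contributing $M_8(1/4)=1$) yields only $J_2 \equiv 1 \pmod 2$, i.e., $u$ odd; but $u$ odd already gives $u^2 \equiv 1 \pmod 8$, so pinning $u$ modulo $8$ needs a congruence modulo a substantially higher power of $\sqrt{-2}$ (an octic analogue of the classical $J(\chi_4,\chi_4)\equiv -1 \pmod{(1+i)^3}$), which is itself a nontrivial lemma you have not supplied. The paper dispatches both issues at once: for $q=p$ it cites the explicit evaluation $J_2(M_8,\phi N)=\pi^2$ with $\pi=a_8+ib_8\sqrt{2}$, $p=a_8^2+2b_8^2$ (BEW, Theorems 12.1.1 and 12.7.1(b)), and for $q=p^t$ it lifts by Hasse--Davenport to $J_2=(-1)^{t-1}\pi^{2t}$; then $u \equiv -1 \pmod 8$ follows by the binomial theorem from $u_1=2a_8^2-p\equiv -1 \pmod 8$, and $p \nmid u$ follows since $p \mid u$ would make $\overline{\pi}$ divide $\pi^{2t}$. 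Your deduction of \eqref{eq:5.14} from \eqref{eq:5.13} is fine once these two facts are in hand.
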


\begin{proof}
By \eqref{eq:5.11} with $\nu = \e$, the sum in \eqref{eq:5.12} equals
\[
J_2(M_8, \phi N) + J_2(M_8^5, \phi N).
\]
First suppose that $q \equiv 7 \pmod 8$.  Then
\[
G_2(M_8) = G_2(M_8^5), \quad
G_2(\phi N) = q
\]
by \cite[Theorem 11.6.1]{BEW}.
Thus each Jacobi sum above equals $q$, which proves
\eqref{eq:5.12}.

Now suppose that $q \equiv 3 \pmod 8$.
An application of \eqref{eq:2.1}
shows that $J_2(M_8^5, \phi N)$ is the complex
conjugate of $J_2(M_8, \phi N)$, 
so that the sum in \eqref{eq:5.13} equals $2 \Re J_2(M_8, \phi N)$.

First consider the case where $q$ is prime, i.e., $q=p$.
Then $G_2(\phi N) = p$ and
by \cite[Theorems 12.1.1 and 12.7.1(b)]{BEW}, 
\[
G_2(M_8) = G(\phi)\pi, \quad G_2(M_8^5)=G(\phi)\opi, \quad
J_2(M_8, \phi N) = \pi^2,
\]
where $\pi=a_8 + i b_8 \sqrt{2}$ is a prime in $\Q(i\sqrt{2})$
of norm $p=\pi \opi=a_8^2 + 2b_8^2$.  
Note that
$\pi^2 = u_1 + iv_1\sqrt{2}$,
where
\[
u_1 = 2a_8^2 - p, \quad v_1= 2a_8b_8, \quad u_1^2 + 2v_1^2 = p^2,
\]
so that
\[
\Re J_2(M_8, \phi N) = u_1 = 2a_8^2 -p \equiv -1 \pmod 8.
\]

In the general case where say $q = p^t$, the Hasse-Davenport
lifting theorem \cite[Theorem 11.5.2]{BEW} yields
\[
J_2(M_8, \phi N) = (-1)^{t-1}\pi^{2t}=
(-1)^{t-1}(u_1 +iv_1\sqrt{2})^t=u+iv\sqrt{2},
\]
for integers $u$, $v$ such that $q^2 = u^2 + 2v^2$.   
Since $u_1 \equiv -1 \pmod 8$, it is easily seen using the binomial
theorem that $u \equiv -1 \pmod 8$. If $p=\pi\opi$ divided $u$,
then $p$ would divide $v$, so that the prime  $\opi$ would divide $\pi^{2t}$,
which is impossible.  Thus $p \nmid u$.
For an elementary proof of
the uniqueness of $|u|$, $|v|$, see \cite[Lemma 3.0.1]{BEW}.
\end{proof}

\noindent
\textit{Remark:}
The sum in Theorem 5.5, namely
\begin{equation}\label{eq:5.15}
Z =\sum_{j \in \f} \phi(j) h(\phi, j),
\end{equation}
can be evaluated when $q \equiv 1 \pmod 4$ as well.
We have $Z=0$ when $q \equiv 5 \pmod 8$,
which can be seen by applying \cite[Lemma 5.1]{Evans}
with $\phi$ in place of $D$, and then replacing $j$ by $jI$,
where $I$ is a primitive fourth root of
unity in $\F$. More work is needed to evaluate $Z$ in the remaining case
where  $q \equiv 1 \pmod 8$.  In this case $Z$ is equal to the
sum $R_2$ in \cite[(5.44)]{Evans} with $\no = B_8$ and
$A_4 = B_8^2$ for an octic character $B_8$ on $\F$.
The proof of \cite[Theorem 3.3.1]{BEW} shows that
\[
J(B_8, \phi) = J(B_8^3, \phi) \in \Q(i\sqrt{2}).
\]
Using this equality to evaluate the sum $R_2$, we have
\begin{equation}\label{eq:5.16}
Z= 2q + 2 \Re J(B_8, \phi)^2, \quad q \equiv 1 \pmod 8.
\end{equation}
We will use \eqref{eq:5.16} to show that
\begin{equation}\label{eq:5.17}
Z=4q, \ \mbox{ when } p \equiv 5 \mbox{ or } 7 \pmod 8,
\end{equation}
and
\begin{equation}\label{eq:5.18} 
Z=4c^2, \ \mbox{ when } p \equiv 1 \mbox{ or } 3 \pmod 8,
\end{equation}
where $c$ and $d$ are the unique pair of integers up to sign
for which
\begin{equation}\label{eq:5.19}
q = c^2 + 2d^2, \quad  p \nmid c.
\end{equation}

First suppose that $p \equiv 7 \pmod 8$.  Then $q = p^{2t}$
for some $t \ge 1$.  If $t=1$, then $J(B_8, \phi)=p$
by \cite[Theorem 11.6.1]{BEW}.   For general $t$, the 
Hasse-Davenport lifting theorem thus yields
$J(B_8, \phi)=(-1)^{t-1}p^t$, so that $J(B_8, \phi)^2=q$.
Thus $Z=4q$ by \eqref{eq:5.16}.

Now suppose that $p \equiv 5 \pmod 8$. 
Since $G(B_8)=G(B_8^p)=G(B_8^5)$ by \cite[Theorem 1.1.4(d)]{BEW},
$J(B_8, \phi) = G(\phi)$.   Thus
$J(B_8, \phi)^2=q$, so again $Z=4q$.   This completes the proof of
\eqref{eq:5.17}.

Next suppose that $p \equiv 3 \pmod 8$.
Then $q = p^{2t}$ for some $t \ge 1$. 
Since $-2$ is a square $\pmod p$, we have the prime splitting
$p = \pi \opi$ in $\Q(i\sqrt{2})$.
Assume first that $t=1$.   Then
\begin{equation}\label{eq:5.20}
J(B_8, \phi) \oJ (B_8, \phi) = q = p^2 = \pi^2 \opi^2, \quad t=1.
\end{equation}
We cannot have $J(B_8, \phi)= \pm p$, otherwise the 
prime ideal factorization
of $J(B_8, \phi)$ in \cite[Theorems 11.2.3, 11.2.9]{BEW} would yield the
contradiction that $p$ ramifies in the cyclotomic field $\Q(\exp(2\pi i/8))$.
In view of \eqref{eq:5.20} and unique factorization in $\Q(i\sqrt{2})$,
we may suppose without loss of generality
that $J(B_8, \phi) = \pi^2$ when $t=1$.   For general $t$,
\begin{equation}\label{eq:5.21}
J(B_8, \phi) = (-1)^{t-1} \pi^{2t} = c+ di\sqrt{2}
\end{equation}
for some integers $c$ and $d$ such $c^2 + 2d^2 = q$.
Note that $p$ cannot divide $c$, for otherwise
$p$ also divides $d$ (since $c^2 + 2d^2 = q$), 
so that $p$ divides $\pi^{2t}$, yielding the
contradiction that the prime $\opi$ divides $\pi$.  By \eqref{eq:5.21},
\[
\Re J(B_8, \phi)^2 = c^2 -2d^2 = 2c^2 -q,
\]
so that by \eqref{eq:5.16},  $Z = 4c^2$.

Finally, suppose that $p \equiv 1 \pmod 8$, and write $q=p^t$
for some $t \ge 1$. 
Since $-2$ is a square $\pmod p$, we have the prime splitting
$p = \pi \opi$ in $\Q(i\sqrt{2})$.
If $t=1$, then without loss of generality, 
$J(B_8, \phi) = \pi$.   For general $t$, 
\[
J(B_8, \phi) = (-1)^{t-1} \pi^t = c+ di\sqrt{2}
\]
for some integers $c$ and $d$ such $c^2 + 2d^2 = q$.
Arguing as in the case
$p \equiv 3 \pmod 8$,
we again obtain $Z=4c^2$ for $c$ as in \eqref{eq:5.19}.
This completes the proof of \eqref{eq:5.18}.

\section{Proof of Katz's identities (1.5)}

When $jk=0$, both sides of \eqref{eq:1.5} vanish,
by \eqref{eq:1.4} and \eqref{eq:1.8}.
We thus assume that $jk \ne 0$.
It suffices to show that 
the Mellin transforms of the left
and right sides of \eqref{eq:1.5} are the same for all characters,
for then  \eqref{eq:1.5} follows by taking inverse Mellin transforms.
Thus it remains to show that $S=T$,
where $S$ and $T$ are given in Theorems 3.1 and 5.1, respectively.
These theorems show that $S$ and $T$ both vanish when $\chi_1$
or $\chi_2$ is even, so we may assume that
\eqref{eq:3.2} and \eqref{eq:3.3} hold.
For brevity, write $D = \mu \phi^i$, where $i \in \{0,1\}$.
Then the equality $S=T$ is equivalent to
\begin{equation}\label{eq:6.1}
\begin{split}
&\frac{q }{G_2(\oD N)}
\{J_2(\nu_1 N M_8, \oD N) + J_2(\nu_1 N M_8^5, \oD N)\} = \\
&\frac{G(\phi D^2)}{G(\phi)}(W(D) + 2(q-1)\delta(D)).
\end{split}
\end{equation}
Noting that $G_2(\oD N)= -G(\oD)^2$ by \eqref{eq:2.2}, and using the formula
for $W(D)$ in Theorem 5.3, we easily see that
\eqref{eq:6.1} holds.  This completes the proof that $S=T$.

\end{document}